\documentclass[12pt]{article}%
\usepackage{amsmath}
\usepackage{graphicx}
\usepackage{amsfonts}
\usepackage{amssymb}
\usepackage{latexsym}
\usepackage{color}
\usepackage[colorlinks,pdfpagemode=FullScreen,pdftex,debug]{hyperref}
\usepackage[T1]{fontenc}
\usepackage[sc]{mathpazo}
\usepackage{microtype}%
\setcounter{MaxMatrixCols}{30}
%TCIDATA{OutputFilter=latex2.dll}
%TCIDATA{Version=5.00.0.2606}
%TCIDATA{CSTFile=article_vn1.cst}
%TCIDATA{Created=Monday, February 24, 2003 13:52:36}
%TCIDATA{LastRevised=Sunday, April 25, 2021 09:18:30}
%TCIDATA{<META NAME="ViewSettings" CONTENT="0">}
%TCIDATA{<META NAME="GraphicsSave" CONTENT="32">}
%TCIDATA{<META NAME="PrintOptions" CONTENT="3616">}
%TCIDATA{<META NAME="PrintViewPercent" CONTENT="100">}
%TCIDATA{<META NAME="SaveForMode" CONTENT="2">}
%TCIDATA{BibliographyScheme=Manual}
%TCIDATA{<META NAME="DocumentShell" CONTENT="General\Blank Document">}
%TCIDATA{Language=American English}
%TCIDATA{PageSetup=72,72,72,72,0}
%TCIDATA{AllPages=
%F=36,\PARA{038<p type="texpara" tag="Body Text" >\hfill \thepage}
%}
\newtheorem{theorem}{Theorem}

\newtheorem{conjecture}[theorem]{Conjecture}

\newtheorem{lemma}[theorem]{Lemma}

\setlength{\topmargin}{-0.7in}
\setlength{\textheight}{9.25in}
\setlength{\oddsidemargin}{0.0in}
\setlength{\evensidemargin}{0.0in}
\setlength{\textwidth}{6.5in}

\newenvironment{proof}[1][Proof]{\noindent{\textbf {#1}  }}  {\hfill$\Box$\bigskip}
\makeatletter
\definecolor {ttlcol}{rgb}{.96,0,.12}
\definecolor {ttlcol}{rgb}{.85,0,0}
\definecolor {pagecol}{rgb}{.98,.98,1}
\definecolor {thmcol}{rgb}{.6,0,0}
\definecolor {deficol}{rgb}{0,.5,.25}
\definecolor {quecol}{rgb}{0,.25,.5}
\definecolor {bgnd}{rgb}{1,1,.95}
%BeginMSIPreambleData
\ifx\pdfoutput\relax\let\pdfoutput=\undefined\fi
\newcount\msipdfoutput
\ifx\pdfoutput\undefined\else
\ifcase\pdfoutput\else
\msipdfoutput=1
\ifx\paperwidth\undefined\else
\ifdim\paperheight=0pt\relax\else\pdfpageheight\paperheight\fi
\ifdim\paperwidth=0pt\relax\else\pdfpagewidth\paperwidth\fi
\fi\fi\fi
%EndMSIPreambleData
\begin{document}

\title{\textbf{On a theorem of Nosal}}
\author{V. Nikiforov\thanks{Department of Mathematical Sciences, University of
Memphis, Memphis TN 38152, USA. Email: \textit{vnikifrv@memphis.edu}} \medskip}
\date{}
\maketitle

\begin{abstract}
Let $G$ be a graph with $m$ edges and spectral radius $\lambda_{1}$. Let
$bk\left(  G\right)  $ stand for the maximal number of triangles with a common
edge in $G$.

In 1970 Nosal proved that if $\lambda_{1}^{2}>m,$ then $G$ contains a
triangle. In this paper we show that the same premise implies that
\[
bk\left(  G\right)  >\frac{1}{12}\sqrt[4]{m}.
\]
This result settles a conjecture of Zhai, Lin, and Shu.

Write $\lambda_{2}$ for the second largest eigenvalue of $G$. Recently, Lin,
Ning, and Wu showed that if $G$ is a triangle-free graph of order at least
three, then
\[
\lambda_{1}^{2}+\lambda_{2}^{2}\leq m,
\]
thereby settling the simplest case of a conjecture of Bollob\'{a}s and the
author. We give a simpler proof of their result.\medskip

\textbf{Keywords:}\textit{ triangle-free graph; spectral radius; graph
booksize; second largest eigenvalue.\smallskip\smallskip}

\textbf{AMS classification: }\textit{05C50}

\end{abstract}

\section{Introduction}

In 1970 Nosal showed that if $G$ is a graph with $m$ edges, and its largest
adjacency eigenvalue $\lambda_{1}$ satisfies
\[
\lambda_{1}^{2}>m,
\]
then $G$ contains a triangle.

During the years this striking and elegant result has attracted significant
attention (see, e.g., \cite{ZLS21} and its references for some highlights.) In
this note we discuss two recent developments of Nosal's result.

The first one concerns the class of subgraphs that are present in $G$ if
$\lambda_{1}^{2}>m$ and $m$ is sufficiently large. \ 

As shown by Zhai, Lin, and Shu in the nice recent paper \cite{ZLS21}, this
class contains graphs other than triangles. These authors studied similar
problems in depth and surveyed some earlier research. In particular, they
raised the following conjecture:

\begin{conjecture}
\label{con1}For every natural number $k$, there exists $m\left(  k\right)  $
such that if $m>m\left(  k\right)  $ and $\lambda_{1}^{2}\geq m,$ then
$bk\left(  G\right)  >k,$ unless $G$ is complete bipartite graph with possibly
some isolated vertices.
\end{conjecture}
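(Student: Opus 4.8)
The plan is to prove the contrapositive: if $b:=bk\left(G\right)$ and $\lambda_{1}^{2}>m$, then $m<\left(12b\right)^{4}$. First I would reduce to the case that $G$ is connected, by replacing it with a component of largest spectral radius (this leaves $\lambda_{1}$ fixed and can only decrease $m$ and $b$), let $\mathbf{x}=\left(x_{v}\right)_{v\in V}$ be the Perron eigenvector with $\|\mathbf{x}\|_{2}=1$, and fix a vertex $u$ with $x_{u}=\max_{v}x_{v}$. Here $e\left(S\right)$ denotes the number of edges with both ends in $S$ and $N\left[v\right]=\left\{v\right\}\cup N\left(v\right)$. Since $\left|N\left(v\right)\cap N\left(w\right)\right|\le b$ for every edge $vw$, the graph $G\left[N\left(u\right)\right]$ has maximum degree at most $b$, so $e\left(N\left(u\right)\right)\le\tfrac{1}{2}b\,d\left(u\right)$.

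I would then record two inequalities. Using $\lambda_{1}^{2}x_{u}=\left(A^{2}\mathbf{x}\right)_{u}=\sum_{w}\left(A^{2}\right)_{uw}x_{w}$, bounding every $x_{w}$ by $x_{u}$, and counting the walks of length two issuing from $u$ gives $\lambda_{1}^{2}\le\sum_{v\sim u}d\left(v\right)=m+e\left(N\left(u\right)\right)-e\left(V\setminus N\left[u\right]\right)$, hence
\[
\lambda_{1}^{2}-m\le e\left(N\left(u\right)\right)\le\tfrac{1}{2}b\,d\left(u\right)\qquad\text{and}\qquad e\left(V\setminus N\left[u\right]\right)\le e\left(N\left(u\right)\right).
\]
Next, expanding $\lambda_{1}^{2}=\mathbf{x}^{T}A^{2}\mathbf{x}=\sum_{v}d\left(v\right)x_{v}^{2}+\sum_{v\ne w}\left|N\left(v\right)\cap N\left(w\right)\right|x_{v}x_{w}$, splitting the second sum into edges and non-edges of $G$, using $\sum_{vw\in E}\left|N\left(v\right)\cap N\left(w\right)\right|x_{v}x_{w}\le b\sum_{vw\in E}x_{v}x_{w}=\tfrac{1}{2}b\lambda_{1}$ on the edge terms and the inequality $x_{v}x_{w}\le\tfrac12\left(x_{v}^{2}+x_{w}^{2}\right)$ on the non-edge terms, the bookkeeping collapses to
\[
\lambda_{1}^{2}\le m+b\lambda_{1},
\]
which for $b=0$ is exactly Nosal's theorem; combined with $\lambda_{1}^{2}\le 2m$ this already yields $\lambda_{1}^{2}-m\le b\sqrt{2m}$.

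These two displayed facts are not enough on their own, because $\lambda_{1}^{2}-m$ may be tiny; the crux is to exploit the near-bipartite structure they expose. The edges inside $N\left(u\right)$ together with those inside $V\setminus N\left[u\right]$ --- at most $b\,d\left(u\right)$ in all --- are precisely the edges obstructing the bipartition $\bigl(N\left(u\right),\left\{u\right\}\cup\left(V\setminus N\left[u\right]\right)\bigr)$, so deleting them leaves a bipartite graph on at most $m$ edges, whose spectral radius is at most $\sqrt{m}$. Comparing Rayleigh quotients along $\mathbf{x}$ gives $\lambda_{1}\le\sqrt{m}+2\sum_{vw\text{ deleted}}x_{v}x_{w}$; the deleted edges inside $N\left(u\right)$ contribute at most $b\lambda_{1}$ as above, while those inside $V\setminus N\left[u\right]$ must be handled through the fact that their endpoints lie at distance at least two from $u$ and hence carry little weight, via $\sum_{v\notin N\left[u\right]}x_{v}^{2}\le 1-x_{u}^{2}\bigl(1+\lambda_{1}^{2}/d\left(u\right)\bigr)$ and $e\left(V\setminus N\left[u\right]\right)\le\tfrac12 b\,d\left(u\right)$. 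Feeding this together with the refinement $x_{u}^{2}\,e\left(N\left(u\right)\right)\le\lambda_{1}^{2}\sum_{vw\in E\left(N\left(u\right)\right)}x_{v}x_{w}\le\tfrac{1}{2}b\lambda_{1}^{3}$ (which holds because $x_{v}\ge x_{u}/\lambda_{1}$ for every $v\in N\left(u\right)$) should pin down $x_{u}$ and $d\left(u\right)$ up to constant factors, and, returned to $\lambda_{1}^{2}-m\le e\left(N\left(u\right)\right)$, should deliver $m<\left(12b\right)^{4}$.

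I expect this last paragraph to carry all the real difficulty. The inequalities of the first two steps are routine spectral bookkeeping, but turning the qualitative near-bipartiteness into the sharp quantitative bound --- in particular estimating the edges inside $V\setminus N\left[u\right]$, whose endpoints are only known to lie far from $u$, and extracting exactly the exponent $1/4$ with the constant $1/12$ --- is where the care is needed. It may be cleaner to avoid edge-deletion altogether and instead bound from below the signed quantity $\sum_{v}x_{v}^{2}\bigl(e\left(N\left(v\right)\right)+e\left(V\setminus N\left[v\right]\right)\bigr)$, which enters the identity underlying $\lambda_{1}^{2}\le m+b\lambda_{1}$ with a favourable sign and is really what has to be controlled.
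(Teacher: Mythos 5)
Your first two paragraphs are sound: the bounds $\lambda_{1}^{2}-m\le e(N(u))\le\tfrac12 b\,d(u)$ and $\lambda_{1}^{2}\le m+b\lambda_{1}$ do follow by the computations you indicate (the latter because, after symmetrization, the non-edge terms contribute at most $\sum_{v}x_{v}^{2}\bigl(m-e(N(v))-e(V\setminus N[v])-d(v)\bigr)$). But, as you concede, the entire burden sits in the third paragraph, and the mechanism proposed there cannot close. The Rayleigh comparison against the bipartite graph obtained by deleting the edges inside $N(u)$ and inside $V\setminus N[u]$ yields $\lambda_{1}\le\sqrt{m}+2\sum_{\mathrm{deleted}}x_{v}x_{w}$; together with the hypothesis $\lambda_{1}\ge\sqrt{m}$ this only says that the deleted Perron weight is nonnegative. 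To extract a contradiction you would need an \emph{upper} bound on $2\sum_{\mathrm{deleted}}x_{v}x_{w}$ that is smaller than $\lambda_{1}-\sqrt{m}$; but that gap may be arbitrarily small (indeed zero under the stated hypothesis $\lambda_{1}^{2}\ge m$), so no upper bound, however sharp, can beat it. All of your auxiliary estimates ($x_{v}\ge x_{u}/\lambda_{1}$ on $N(u)$, the bound on $\sum_{v\notin N[u]}x_{v}^{2}$, $e(V\setminus N[u])\le\tfrac12 b\,d(u)$) are upper bounds on deleted weight and hence push in the useless direction; nothing in the sketch actually ``pins down $x_{u}$ and $d(u)$,'' and returning to $\lambda_{1}^{2}-m\le e(N(u))$ is vacuous when $\lambda_{1}^{2}-m$ is tiny. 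A secondary issue: your contrapositive assumes $\lambda_{1}^{2}>m$ strictly, whereas the statement requires handling $\lambda_{1}^{2}=m$ for graphs that are not complete bipartite.

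The paper resolves exactly this degeneracy by two devices absent from your plan. First, a dichotomy on the Perron weights of edges: if some edge $\{i,j\}$ has $x_{i}x_{j}<1/(8\sqrt{m})$, deleting it lowers $\rho$ by less than $1/(4\sqrt{m})$ while lowering $m$ by $1$, so iterating manufactures a graph in which $\rho^{2}-e$ is genuinely large, at which point $3t\ge\rho^{3}-\rho m$ together with $bk\cdot m\ge 3t$ finishes. Second, when every edge satisfies $x_{i}x_{j}\ge 1/(8\sqrt{m})$, the graph has $O(\sqrt{m})$ non-isolated vertices with comparable weights, and a walk-counting strengthening $3t\ge\rho^{3}-\rho m+c\rho t''$ (where $t''$ counts induced one-edge triples), combined with the purely combinatorial inequality $(n-3\beta)t\le\beta t''$, forces $\beta=bk(G)>\sqrt{n}/12$. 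Some such global mechanism, rather than a local analysis at the maximum-weight vertex, is what your argument is missing.
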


In the above conjecture, $bk\left(  G\right)  $ stands for the \emph{booksize}
of $G,$ that is, the maximum number of triangles with a common edge in $G$.
Lower bounds on the booksize are known for Mantel's theorem, but not for the
context of Nosal's inequality.

We confirm Conjecture \ref{con1} by the following theorem:

\begin{theorem}
\label{mt} If $G$ is a graph with $m$ edges and $\lambda_{1}^{2}\geq m,$ then
\[
bk\left(  G\right)  >\frac{1}{12}\sqrt[4]{m},
\]
unless $G$ is a complete bipartite graph with possibly some isolated vertices.
\end{theorem}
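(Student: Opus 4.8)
The plan is to combine a trace (moment) estimate with a localisation at the vertex of maximum Perron weight, everything being reduced to a stability analysis near the equality case of Nosal's inequality.

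First some reductions. It suffices to prove that a \emph{connected}, non-complete-bipartite graph $G$ with $\lambda_1^2\ge e(G)$ satisfies $bk(G)>\tfrac1{12}\sqrt{\lambda_1}$: indeed, for disconnected $G$ pass to a component $G_0$ realising $\lambda_1$, note $\lambda_1(G_0)^2\ge m\ge e(G_0)$, and observe that either $G_0$ is complete bipartite — which forces $e(G_0)=m$, so $G$ is a complete bipartite graph plus isolated vertices — or $bk(G)\ge bk(G_0)>\tfrac1{12}\sqrt{\lambda_1(G_0)}=\tfrac1{12}\sqrt{\lambda_1(G)}\ge\tfrac1{12}\sqrt[4]{m}$. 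So assume $G$ connected and not complete bipartite; write $\lambda=\lambda_1$, $m=e(G)$, take a unit Perron eigenvector $\mathbf x$, and fix $v_1$ with $x_{v_1}$ maximal, $d=d(v_1)$; then $d\ge\lambda\ge\sqrt m$. From $6\,t(G)=\operatorname{tr}(A^3)=\sum_i\mu_i^3\ge\lambda^3-\bigl(\max_{i\ge2}|\mu_i|\bigr)\sum_{i\ge2}\mu_i^2\ge\lambda^3-\lambda(2m-\lambda^2)=2\lambda(\lambda^2-m)$ and the trivial bound $bk(G)\ge\frac1m\sum_{uv\in E}t(uv)=3t(G)/m$ (the booksize dominates the mean number of triangles per edge), I get $bk(G)\ge\lambda(\lambda^2-m)/m\ge(\lambda^2-m)/\lambda$, so the theorem holds whenever $\lambda^2-m>\tfrac1{12}\lambda^{3/2}$; assume from now on $\lambda^2\le m+\tfrac1{12}\lambda^{3/2}$, hence $d\le\lambda^2\le m+\tfrac1{12}\lambda^{3/2}$.

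Next, localise at $v_1$. The $A^2$-eigenequation at $v_1$ together with $x_w\le x_{v_1}$ gives $\lambda^2x_{v_1}=d\,x_{v_1}+\sum_{w\ne v_1}|N(v_1)\cap N(w)|\,x_w\le x_{v_1}\sum_{u\sim v_1}d(u)$, hence
\[
\lambda^2\ \le\ \sum_{u\sim v_1}d(u)\ =\ m+e\bigl(G[N(v_1)]\bigr)-e\bigl(G[\overline{N[v_1]}]\bigr).
\]
Write $H=G[N(v_1)]$, so $e(H)\ge\lambda^2-m\ge0$. If $e(H)=0$, the chain forces $e(G[\overline{N[v_1]}])=0$ and $\lambda^2=m$; then $G$ is bipartite with parts $N(v_1)$ and $\{v_1\}\cup\overline{N[v_1]}$, triangle-free with $\lambda^2=m$, so by Nosal's equality case $G$ is complete bipartite — done. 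If $e(H)\ge1$, then (since $v_1$ is a common neighbour of every adjacent pair in $N(v_1)$) the vertex of largest $H$-degree yields an edge at $v_1$ lying in at least $\Delta(H)\ge2e(H)/d$ triangles, so $bk(G)\ge2e(H)/d$; in particular if $e(H)\ge\tfrac1{24}\,d\sqrt\lambda$ then $bk(G)\ge\tfrac1{12}\sqrt\lambda\ge\tfrac1{12}\sqrt[4]{m}$ — done. Only the regime $1\le e(H)<\tfrac1{24}\,d\sqrt\lambda$ survives, and there $\lambda$ is barely above $\sqrt m$ and $H$ is sparse, while $d$ may be as large as $\approx m$, so $2e(H)/d$ is too weak alone.

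To close this regime I would use the \emph{exact} identity behind the displayed inequality,
\[
\lambda^2\ =\ m+e(H)-e\bigl(G[\overline{N[v_1]}]\bigr)-\frac1{x_{v_1}}\sum_{w\ne v_1}|N(v_1)\cap N(w)|\,(x_{v_1}-x_w),
\]
(with the last term nonnegative), splitting on the size of $d$. If $d$ is close to $m$, almost every edge meets $v_1$, so $G$ is a large star plus $\le m-d$ extra edges, and a perturbation/interlacing argument should show that any single extra edge already makes $\lambda^2<m$ — hence $e(H)=0$ and $G$ is complete bipartite. If $d$ is bounded away from $m$, there are $\Theta(m-d)$ vertices at distance two from $v_1$, and their total Perron-weight deficit $\sum_{w}|N(v_1)\cap N(w)|(x_{v_1}-x_w)$ must be large — so large, against $\lambda^2\ge m$, as to force $e(H)$ past $\tfrac1{24}\,d\sqrt\lambda$ (a contradiction) unless the distance-two part is essentially uniform in Perron weight, which a rigidity argument pins down as $G$ complete bipartite. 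The main obstacle is precisely this last step: in the surviving regime all the soft inequalities are simultaneously near-equalities, so what is really required is an honest, quantitatively explicit stability form of Nosal's theorem — that $\lambda_1^2\ge m$ together with $bk(G)\le\tfrac1{12}\sqrt[4]{m}$ forces $G$ complete bipartite — and the fourth root together with the constant $\tfrac1{12}$ emerge from balancing the Perron-weight deficit against $e(H)\ge\lambda^2-m$, $e(H)\le m-d$ and $d\le\lambda^2$. That is where the bulk of the work lies.
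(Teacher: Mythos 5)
Your proposal is not a complete proof: the two regimes you actually close are the easy ones, and the case left open at the end --- $\lambda^{2}-m\leq\frac{1}{12}\lambda^{3/2}$ together with $e\left(  G\left[  N\left(  v_{1}\right)  \right]  \right)  <\frac{1}{24}d\sqrt{\lambda}$ --- is where essentially all of the content of the theorem lives. This surviving regime contains, for instance, every small perturbation of a large complete bipartite graph (for $K_{s,s}$ plus one edge inside a part one computes $\lambda^{2}-m\rightarrow1$ while $m=s^{2}+1\rightarrow\infty$), so the surplus $\lambda^{2}-m$ carries no usable information there, and neither of the two quantitative bounds you establish, $bk\left(  G\right)  \geq\left(  \lambda^{2}-m\right)  /\lambda$ and $bk\left(  G\right)  \geq2e\left(  H\right)  /d$, comes anywhere near $\frac{1}{12}\sqrt[4]{m}$. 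For this case you offer only a programme (``a perturbation/interlacing argument should show\dots'', ``a rigidity argument pins down\dots'') and you yourself state that what is really needed is a quantitative stability form of Nosal's theorem and that this ``is where the bulk of the work lies''; that is an accurate diagnosis of the difficulty, not an argument. At least one of the sketched sub-claims also needs repair: a star plus a single edge between two leaves does not always have $\lambda^{2}<m$ (for $K_{1,2}$ plus an edge, i.e. $K_{3}$, one has $\lambda^{2}=4>3=m$, and the same holds for all small books).

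For contrast, the paper closes exactly this gap with two devices absent from your outline, neither of which localizes at the heaviest vertex. First, an edge-deletion process: whenever some edge has $x_{i}x_{j}<\frac{1}{8\sqrt{e\left(  G\right)  }}$, deleting it decreases $\rho$ by less than $\frac{1}{4\sqrt{m}}$ while $\sqrt{e}$ drops by about $\frac{1}{2\sqrt{m}}$, so the surplus $\rho-\sqrt{e}$ grows with each deletion; if this runs for $\left\lceil m/2\right\rceil$ steps the surplus reaches $\Theta\left(  \sqrt{m}\right)$ and your own trace bound (Lemma \ref{BoNiL}) finishes. Second, when every edge has $x_{i}x_{j}\geq\frac{1}{8\sqrt{e}}$, the trace bound is strengthened to $3t\geq\rho^{3}-\rho m+c\rho t^{\prime\prime}$ (Theorem \ref{th1}), where $t^{\prime\prime}$ counts induced one-edge triples and $c\rho\geq\left(  8n\right)  ^{-1/2}$; paired with the purely combinatorial inequality $\left(  n-3\beta\right)  t\leq\beta t^{\prime\prime}$ (Lemma \ref{colem}) this yields $\beta>\sqrt{n}/12\geq\frac{1}{12}\sqrt[4]{2m}$ with no reference to the size of $\lambda^{2}-m$ at all. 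The quantity $t^{\prime\prime}$ is precisely the second source of triangles that your near-equality regime lacks; without some substitute for it, the case where $\lambda^{2}-m$ stays bounded cannot be closed.
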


The other main result of our note concerns the following conjecture of
Bollob\'{a}s and the author \cite{BoNi07}:

\begin{conjecture}
\label{con2} Let $G$ be a graph with $m$ edges, at least $r+1$ vertices, and
second largest eigenvalue $\lambda_{2}$. If $G$ is $K_{r+1}$-free, then
\begin{equation}
\lambda_{1}^{2}+\lambda_{2}^{2}\leq2\left(  1-\frac{1}{r}\right)  m.
\label{BN}%
\end{equation}

\end{conjecture}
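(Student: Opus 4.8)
The plan is to attack the general $K_{r+1}$-free case by first extracting what the classical spectral Tur\'{a}n inequality already gives and then reformulating the conjecture as a lower bound on the tail of the spectrum. I would assume $G$ is connected (otherwise argue componentwise, noting that the hypothesis $n\ge r+1$ is exactly what excludes small dense graphs such as $K_r$, for which $\lambda_1^2+\lambda_2^2=(r-1)^2+1>(r-1)^2=2(1-1/r)m$, so the inequality genuinely fails there). Let $A$ be the adjacency matrix with eigenvalues $\lambda_1\ge\lambda_2\ge\cdots\ge\lambda_n$ and orthonormal eigenvectors $x_1,\dots,x_n$, with $x_1>0$ the Perron vector. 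Nikiforov's spectral Tur\'{a}n inequality $\lambda_1^2\le 2(1-1/r)m$ holds for every $K_{r+1}$-free graph, with equality for the Tur\'{a}n graph $T_r$; this already settles the case $\lambda_2=0$ and shows the content of the conjecture lies entirely in the interaction between $\lambda_1$ and $\lambda_2$. Since $\sum_{i=1}^n\lambda_i^2=\operatorname{tr}(A^2)=2m$, the target inequality is equivalent to
\[
\sum_{i=3}^n\lambda_i^2\ \ge\ \frac{2m}{r},
\]
so the whole problem reduces to a uniform lower bound on the squared tail of the spectrum for $K_{r+1}$-free graphs on at least $r+1$ vertices.

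Two routes suggest themselves for this tail bound. The first is a moment method imitating the triangle-free argument: combine $\operatorname{tr}(A^2)=2m$ with $\operatorname{tr}(A^3)=6t$, where $t$ is the number of triangles, use a Tur\'{a}n-type upper bound on $t$, and exploit $\sum_i\lambda_i=0$. Splitting the spectrum into its positive and negative parts, using $\sum_{\lambda_i>0}\lambda_i=\sum_{\lambda_i<0}|\lambda_i|\ge\lambda_1+\lambda_2$, and applying Cauchy--Schwarz to the negative eigenvalues gives a lower bound on $\sum_{i\ge3}\lambda_i^2$; the task is then to push this past the threshold $2m/r$. The second route is induction on $r$: every neighbourhood $G[N(v)]$ is $K_r$-free, so the inductive hypothesis controls its two largest squared eigenvalues, and one would try to assemble these local bounds---via eigenvalue interlacing or a weighted quotient matrix built from the Perron vector---into the global statement for $G$.

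The main obstacle is precisely the step where the triangle-free proof loses its power. For $r=2$ one has $\operatorname{tr}(A^3)=0$, which pins $\lambda_1^3+\lambda_2^3$ against the negative tail and forces that tail to be large; but for $r\ge 3$ the extremal graph $T_r$ itself contains many triangles, so $\operatorname{tr}(A^3)$ is large and positive and no longer governs the top of the spectrum. Hence in the first route one must control $\lambda_2$ against a nonzero, structure-dependent triangle count, and in the second route interlacing discards exactly the second-eigenvalue information one is trying to track, since a large $\lambda_2(G)$ need not surface in any single $K_r$-free neighbourhood. I would therefore concentrate on the reformulated inequality $\sum_{i\ge3}\lambda_i^2\ge 2m/r$, attempting it through a refined analysis of the negative part of the spectrum and, in particular, by weighting vertices with the Perron entries so as to obtain an edge-counting (rather than vertex-counting) Motzkin--Straus estimate for the two leading eigenvectors simultaneously. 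Closing this gap uniformly in $r$ is the crux, and is exactly why the conjecture remains open beyond $r=2$.
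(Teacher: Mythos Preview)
The statement you were asked to prove is labelled a \emph{conjecture} in the paper, and the paper does not prove it. The only case the paper settles is $r=2$, and that is stated and proved separately as Theorem~\ref{th2}; the author explicitly remarks at the end that the method ``seems unlikely'' to extend to $r\ge 3$. So there is no proof in the paper for you to be compared against on the general statement, and your write-up is, quite appropriately, a research plan rather than a proof: you reformulate the target as the tail bound $\sum_{i\ge 3}\lambda_i^2\ge 2m/r$, sketch two possible attacks (moments plus Cauchy--Schwarz on the negative part, and induction via neighbourhoods with interlacing), and then correctly identify why both stall for $r\ge 3$.

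For the one case the paper does handle, $r=2$, it is worth noting that the paper's argument is sharper and more self-contained than either of your sketches. Rather than Cauchy--Schwarz on the negative eigenvalues, the paper proves a clean analytic lemma (Lemma~\ref{vecl}): if nonnegative reals $x_1,\dots,x_k\le a$ satisfy $\sum x_i^2\le a^2+b^2$ with $b\le a$, then $\sum x_i^p\le a^p+b^p$ for every $p>2$, with equality only in the obvious case. Applied with $a=\lambda_1$, $b=\lambda_2$, $x_i=|\lambda_{i+2}|$, and $p=3$, the hypothesis $\lambda_1^2+\lambda_2^2>m$ (equivalently $\sum_{i\ge 3}\lambda_i^2<m$) yields $\lambda_1^3+\lambda_2^3>\sum_{i\ge 3}|\lambda_i|^3$, whence $6t(G)=\sum_i\lambda_i^3>0$. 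This is exactly the ``$\operatorname{tr}(A^3)=0$'' mechanism you allude to, but packaged so that the inequality and the equality characterisation fall out simultaneously. Your moment/Cauchy--Schwarz route would recover the inequality for $r=2$ with more work and would not by itself pin down the extremal graphs.

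In short: there is no gap to name, because neither you nor the paper proves the conjecture; your diagnosis of the obstruction for $r\ge 3$ is accurate and matches the author's own concluding remark.
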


Recently, Lin, Ning, and Wu \cite{LNW21} settled the case $r=2$ of Conjecture
\ref{con2} by a clever argument using majorization theory.

Before stating their result, recall that a \emph{blow-up} of a graph $H$ is
obtained by replacing each vertex $v$ of $H$ by an independent set $B_{v}$ and
replacing each edge $\left\{  u,v\right\}  $ of $H$ by a complete bipartite
graph with vertex classes $B_{u}$ and $B_{v}$. Also, $P_{k}$ stands for the
path of order $k.$

\begin{theorem}
[Lin, Ning, Wu]\label{th2}Let $G$ be a graph with $m$ edges, of order at least
$3,$ and let $\lambda_{2}$ be its second largest adjacency eigenvalue. If $G$
is triangle-free, then
\[
\lambda_{1}^{2}+\lambda_{2}^{2}\leq m.
\]
Equality holds if and only if $G$ is a blow-up of $P_{2}\cup K_{1},$
$2P_{2}\cup K_{1},$ $P_{4}\cup K_{1},$ or $P_{5}\cup K_{1}.$
\end{theorem}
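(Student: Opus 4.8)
The plan is to work with the two trace identities that hold for every triangle-free graph $G$: since $G$ has no triangle, $\operatorname{tr}(A^{3})=0$, i.e. $\sum_{i}\lambda_{i}^{3}=0$; and always $\sum_{i}\lambda_{i}^{2}=\operatorname{tr}(A^{2})=2m$. Because $2m=\sum_{i}\lambda_{i}^{2}$, the desired inequality $\lambda_{1}^{2}+\lambda_{2}^{2}\le m$ is equivalent to $\lambda_{1}^{2}+\lambda_{2}^{2}\le\sum_{i\ge 3}\lambda_{i}^{2}$, so only the tail of the spectrum matters. One may assume $G$ has an edge (else $m=\lambda_{1}=\lambda_{2}=0$), so $\lambda_{1}>0$; and $\lambda_{2}\ge 0$, because an edge together with a third vertex induces $K_{2}\cup K_{1}$ or $P_{3}$, each of which has second eigenvalue $0$, whence $\lambda_{2}(G)\ge 0$ by interlacing.

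Next put $a=\lambda_{1}$ and $b=\lambda_{2}$, so $a\ge b\ge 0$. Since $\lambda_{i}\le\lambda_{2}$ and $\lambda_{i}\ge\lambda_{n}\ge-\lambda_{1}$ for $i\ge 3$, every such $\lambda_{i}$ lies in $[-a,b]$; and applying the trace identities to the truncated spectrum gives $\sum_{i\ge 3}\lambda_{i}=-(a+b)$ and $\sum_{i\ge 3}\lambda_{i}^{3}=-(a^{3}+b^{3})$. Thus everything reduces to the following elementary statement, which I regard as the core of the proof: \emph{if $\mu_{1},\dots,\mu_{k}\in[-a,b]$ with $a\ge b\ge 0$, $\ \sum_{i}\mu_{i}=-(a+b)$ and $\ \sum_{i}\mu_{i}^{3}=-(a^{3}+b^{3})$, then $\sum_{i}\mu_{i}^{2}\ge a^{2}+b^{2}$, with equality only if the multiset $\{\mu_{1},\dots,\mu_{k}\}$ equals $\{-a,-b,0,\dots,0\}$.}

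To prove this numerical claim I would split the indices into $L=\{i:\mu_{i}\le -b\}$ and $S=\{i:\mu_{i}>-b\}$. For $i\in L$ one has $\mu_{i}(\mu_{i}+a)(\mu_{i}+b)\ge 0$ (the three factors are $\le 0$, $\ge 0$, $\le 0$), i.e. $(a+b)\mu_{i}^{2}\ge -ab\,\mu_{i}-\mu_{i}^{3}$; summing this over $L$, substituting $\sum_{L}\mu_{i}=-(a+b)-\sum_{S}\mu_{i}$ and $\sum_{L}\mu_{i}^{3}=-(a^{3}+b^{3})-\sum_{S}\mu_{i}^{3}$, and adding back $\sum_{S}\mu_{i}^{2}$, one finds after simplification (using $ab(a+b)+a^{3}+b^{3}=(a+b)(a^{2}+b^{2})$) that
\[
\sum_{i}\mu_{i}^{2}\ \ge\ a^{2}+b^{2}+\frac{1}{a+b}\sum_{i\in S}\mu_{i}(\mu_{i}+a)(\mu_{i}+b).
\]
Since each summand with $\mu_{i}\ge 0$ is non-negative, the issue is the indices $i\in S$ with $-b<\mu_{i}<0$; the naive bound $-\mu_{i}^{3}\le a\mu_{i}^{2}$ loses a constant factor, so here I expect the genuine work: one must exploit the first-moment constraint $\sum_{i}\mu_{i}=-(a+b)$ to force enough mass to sit at or near $-a$, e.g. by a short extreme-point/``compression'' analysis showing that the minimum of $\sum_{i}\mu_{i}^{2}$ subject to the two moment constraints is attained only when at most two of the $\mu_{i}$ are nonzero. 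That makes the displayed lower bound at least $a^{2}+b^{2}$ and simultaneously pins down the equality case.

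For the equality statement, the lemma shows $\lambda_{1}^{2}+\lambda_{2}^{2}=m$ forces the spectrum of $G$ to be $\{\lambda_{1},\lambda_{2},0,\dots,0,-\lambda_{2},-\lambda_{1}\}$ (with $\lambda_{2}=0$ permitted). In particular the spectrum is symmetric about $0$, hence every odd trace $\operatorname{tr}(A^{2j+1})$ vanishes, $G$ has no closed walk of odd length, and $G$ is bipartite; moreover $G$ has at most two positive eigenvalues, so $\operatorname{rank}(A)\le 4$. A bipartite graph of rank at most $4$ is, up to adding isolated vertices, a disjoint union of at most two complete bipartite graphs or a single connected bipartite graph of rank $4$; invoking the (easy) classification of connected bipartite graphs of rank $4$ one obtains exactly the blow-ups of $P_{2}\cup K_{1}$, $2P_{2}\cup K_{1}$, $P_{4}\cup K_{1}$, and $P_{5}\cup K_{1}$. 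The converse is immediate: a blow-up of a bipartite graph $H$ with at most two positive eigenvalues is again bipartite with at most two positive eigenvalues, so $\lambda_{1}^{2}+\lambda_{2}^{2}=\sum_{\lambda_{i}>0}\lambda_{i}^{2}=m$, and $P_{2}$, $2P_{2}$, $P_{4}$, $P_{5}$ (with or without an added $K_{1}$) are all such graphs $H$.
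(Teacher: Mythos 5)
Your overall architecture is sound and genuinely different from the paper's: you keep the signs of the eigenvalues and feed in the first and third trace identities ($\operatorname{tr}A=0$, $\operatorname{tr}A^{3}=0$), reducing everything to a moment inequality, whereas the paper proves the contrapositive and uses only the second-moment bound $\sum_{i\ge3}\lambda_i^{2}<\lambda_1^{2}+\lambda_2^{2}$ together with $|\lambda_i|\le\lambda_1$, applied to the absolute values $|\lambda_3|,\dots,|\lambda_n|$. Its Lemma \ref{vecl} is a clean convexity statement (strict convexity of $z\mapsto z^{p/2}$ lets one shift mass between two coordinates to increase $\sum x_i^{p}$ while preserving $\sum x_i^{2}$), yielding $\sum_{i\ge3}|\lambda_i|^{3}<\lambda_1^{3}+\lambda_2^{3}$ and hence $6t=\sum\lambda_i^{3}>0$. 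The lesson of the paper's route is that the first-moment and third-moment constraints you are trying to exploit are not needed at all.

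The genuine gap is exactly where you flag it: your core numerical claim is not proven. Your factorization argument handles only the indices with $\mu_i\le-b$ or $\mu_i\ge0$, and leaves the term $\sum_{i\in S}\mu_i(\mu_i+a)(\mu_i+b)$ over the indices with $-b<\mu_i<0$, where every summand is strictly negative; the entire difficulty of the theorem sits in that residual sum. The proposed repair --- ``a short extreme-point/compression analysis showing that the minimum of $\sum\mu_i^{2}$ subject to the two moment constraints is attained only when at most two of the $\mu_i$ are nonzero'' --- is not routine and is not carried out. Minimizing a quadratic subject to one linear and one cubic equality constraint plus box constraints is not a linear program; a KKT/Lagrange analysis only forces the coordinates strictly inside $(-a,b)$ to take at most two distinct values, so the candidate minimizers form a several-parameter family (multiplicities at $-a$, at $b$, and at two interior values) that still has to be analyzed case by case, including the verification that configurations such as many equal copies of $-\sqrt{a^{2}-ab+b^{2}}$ do not violate the bound. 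Until that lemma is proved (with its equality characterization, on which your entire treatment of the equality case also rests), the argument is incomplete. If you want to salvage your moment-based framework, the most economical fix is to replace your lemma by the paper's: pass to absolute values, discard the first-moment identity, and prove the $\ell^{2}$-versus-$\ell^{3}$ comparison by the two-coordinate convexity shift.
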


In Section \ref{BNS} we give a simple straightforward proof of Theorem
\ref{th2}.

Theorem \ref{mt} is proved in Sections \ref{sups} and \ref{mtS}.

\section{Notation and preliminary results}

A\emph{ }$k$\emph{-walk} stands for a walk on $k$ vertices, that is, a walk of
length $k-1$.

Given a graph $G$, we write:\medskip

- $V\left(  G\right)  $ for the set of its vertices and $E\left(  G\right)  $
for the set of its edges;\medskip

- $e\left(  G\right)  $ for the number of its edges;\medskip

- $t\left(  G\right)  $ for the number of its triangles;\medskip

- $t^{\prime\prime}\left(  G\right)  $ for the number of its induced subgraphs
of order three with exactly one edge;\medskip

- $w_{k}\left(  G\right)  $ for the number of its $k$-walks;\medskip

- $\rho\left(  G\right)  $ for its largest adjacency eigenvalue.\medskip

For a vertex $u\in V\left(  G\right)  $, we write:\medskip

- $N\left(  u\right)  $ for the set of its neighbors;\medskip

- $\overline{N}\left(  u\right)  $ for the set $V\left(  G\right)  \backslash
N\left(  u\right)  $;\medskip

- $w_{k}\left(  u\right)  $ for the number of $k$-walks starting with
$u$;\medskip

- $t\left(  u\right)  $ for the number of triangles containing $u$;\medskip

- $t^{\prime\prime}\left(  u\right)  $ for the number of edges $\left\{
v,w\right\}  $ of $G$ such that $u\in\overline{N}\left(  u\right)
\cap\overline{N}\left(  v\right)  .\medskip$

Note, in particular, that
\[%
%TCIMACRO{\tsum \limits_{u\in V\left(  G\right)  }}%
%BeginExpansion
{\textstyle\sum\limits_{u\in V\left(  G\right)  }}
%EndExpansion
t^{\prime\prime}\left(  u\right)  =t^{\prime\prime}\left(  G\right)  .
\]

For a proof of the following theorem of Wei \cite{Wei52} we refer the reader
to \cite{CRS10}, p. 182.

\begin{theorem}
[\textbf{Wei}]\label{tWei}Let $G$ be a connected nonbipartite graph of order
$n$ and let $\left(  x_{1},\ldots,x_{n}\right)  $ be a positive eigenvector to
$\rho\left(  G\right)  .$ For every vertex $u\in V\left(  G\right)  ,$%
\[
\lim_{k\rightarrow\infty}\frac{w_{k}\left(  u\right)  }{w_{k}\left(  G\right)
}=\frac{x_{u}}{x_{1}+\cdots+x_{n}}.
\]

\end{theorem}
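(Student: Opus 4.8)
The plan is to read off both $w_{k}(u)$ and $w_{k}(G)$ from the spectral decomposition of the adjacency matrix $A$ of $G$ and then to isolate the contribution of the Perron root. Since a $k$-walk has length $k-1$, we have $w_{k}(u)=(A^{k-1}\mathbf{1})_{u}$ and $w_{k}(G)=\mathbf{1}^{T}A^{k-1}\mathbf{1}$, where $\mathbf{1}$ is the all-ones vector. As $A$ is real symmetric, I would fix an orthonormal basis $z_{1},\dots,z_{n}$ of eigenvectors with eigenvalues $\lambda_{1}\geq\cdots\geq\lambda_{n}$, where $\lambda_{1}=\rho(G)$ and $z_{1}$ is a positive multiple of the given eigenvector $x=(x_{1},\dots,x_{n})$. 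Writing $A^{k-1}=\sum_{i}\lambda_{i}^{k-1}z_{i}z_{i}^{T}$ then gives
\[
w_{k}(u)=\sum_{i=1}^{n}\lambda_{i}^{k-1}(z_{i})_{u}\,s_{i},\qquad w_{k}(G)=\sum_{i=1}^{n}\lambda_{i}^{k-1}s_{i}^{2},
\]
where $s_{i}=z_{i}^{T}\mathbf{1}=\sum_{v}(z_{i})_{v}$.

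The decisive step is Perron--Frobenius applied through the hypotheses. Since $G$ is connected, $A$ is irreducible, and since $G$ is nonbipartite, $A$ is aperiodic; hence $\rho(G)=\lambda_{1}$ is a simple eigenvalue that strictly dominates the others in modulus, i.e. $|\lambda_{i}|<\lambda_{1}$ for every $i\geq2$. This is the only place where nonbipartiteness enters, and it is essential: in the bipartite case $-\rho(G)$ is also an eigenvalue, so the term $(\lambda_{n}/\lambda_{1})^{k-1}$ oscillates and the limit need not exist. Moreover, because $z_{1}$ is a positive vector, $s_{1}=\sum_{v}(z_{1})_{v}>0$.

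With strict dominance in hand, I would divide both expressions by $\lambda_{1}^{k-1}$ and let $k\to\infty$. Each ratio $(\lambda_{i}/\lambda_{1})^{k-1}$ with $i\geq2$ tends to $0$, so all subdominant terms vanish and
\[
\frac{w_{k}(u)}{\lambda_{1}^{k-1}}\longrightarrow(z_{1})_{u}\,s_{1},\qquad\frac{w_{k}(G)}{\lambda_{1}^{k-1}}\longrightarrow s_{1}^{2}.
\]
Dividing these two limits, which is legitimate since $s_{1}>0$, yields
\[
\lim_{k\to\infty}\frac{w_{k}(u)}{w_{k}(G)}=\frac{(z_{1})_{u}}{s_{1}}=\frac{(z_{1})_{u}}{\sum_{v}(z_{1})_{v}}.
\]
Finally, this quantity is invariant under rescaling of $z_{1}$, so substituting the given positive eigenvector $x$ for $z_{1}$ turns the right-hand side into $x_{u}/(x_{1}+\cdots+x_{n})$, as claimed. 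The main obstacle is exactly the justification of the strict spectral gap $|\lambda_{i}|<\lambda_{1}$ for $i\geq2$; once that is secured via irreducibility and aperiodicity, the remainder is routine linear algebra.
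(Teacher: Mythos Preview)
Your argument is correct and is the standard spectral-decomposition proof of Wei's theorem. Note, however, that the paper does not supply its own proof of this statement: it merely cites the reference \cite{CRS10}, p.~182, so there is no in-paper argument to compare against. The approach you outline---expressing $w_{k}(u)$ and $w_{k}(G)$ via $A^{k-1}$, expanding in an orthonormal eigenbasis, and invoking the strict spectral gap $|\lambda_{i}|<\lambda_{1}$ for $i\geq2$ guaranteed by connectedness and nonbipartiteness---is precisely the classical route and matches what one finds in the cited source.
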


The following inequality is the instance $r=2$ of Theorem 1 in \cite{BoNi07}:

\begin{lemma}
\label{BoNiL}If $G$ is a graph with $e\left(  G\right)  =m,$ $\rho\left(
G\right)  =\rho,$\ and $t\left(  G\right)  =t,$ then%
\[
3t\geq\rho^{3}-\rho m.
\]

\end{lemma}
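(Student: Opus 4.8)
The plan is to work with the spectrum of the adjacency matrix $A$ of $G$ and to rewrite the claimed inequality purely in terms of the eigenvalues. Denote the eigenvalues of $A$ by $\lambda_{1}=\rho\geq\lambda_{2}\geq\cdots\geq\lambda_{n}$, and recall the standard trace identities
\[
\operatorname{tr}A^{2}=\sum_{i=1}^{n}\lambda_{i}^{2}=2m,\qquad\operatorname{tr}A^{3}=\sum_{i=1}^{n}\lambda_{i}^{3}=6t,
\]
the first because $\left( A^{2}\right) _{uu}$ equals the degree of $u$, the second because every closed walk of length three is a traversal of a triangle and each triangle is traversed six times.

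Next I would substitute these into the difference $6t-2\left( \rho^{3}-\rho m\right)$. Using $\rho=\lambda_{1}$ and pulling the index $i=1$ out of the two sums, the $\lambda_{1}^{3}$ contributions cancel and one is left with
\[
6t-2\left( \rho^{3}-\rho m\right) =\sum_{i=2}^{n}\lambda_{i}^{3}+\lambda_{1}\sum_{i=2}^{n}\lambda_{i}^{2}=\sum_{i=2}^{n}\lambda_{i}^{2}\left( \lambda_{1}+\lambda_{i}\right) .
\]
It then remains to check that the right-hand side is nonnegative. Here the only structural input is Perron--Frobenius: since $A$ is a nonnegative symmetric matrix, its spectral radius is $\lambda_{1}$, so $\lambda_{1}\geq\left\vert \lambda_{i}\right\vert$ and hence $\lambda_{1}+\lambda_{i}\geq0$ for every $i$. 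Thus each summand $\lambda_{i}^{2}\left( \lambda_{1}+\lambda_{i}\right)$ is nonnegative, the whole sum is nonnegative, and dividing by $2$ yields $3t\geq\rho^{3}-\rho m$.

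I do not anticipate a genuine obstacle: the argument is a one-line identity followed by a sign check. The only points needing a little care are the bookkeeping in the trace computation and the observation $\lambda_{1}\geq\left\vert \lambda_{i}\right\vert$, which is exactly where the nonnegativity of $A$ enters. As a sanity check, when $t=0$ the inequality reduces to $\rho^{2}\leq m$, i.e.\ the contrapositive of Nosal's theorem, so the lemma is a quantitative strengthening of it, consistent with its intended use in this paper.
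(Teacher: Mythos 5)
Your argument is correct: the trace identities $\sum_{i}\lambda_{i}^{2}=2m$ and $\sum_{i}\lambda_{i}^{3}=6t$ do reduce the claim to the identity $6t-2\left(\rho^{3}-\rho m\right)=\sum_{i\geq2}\lambda_{i}^{2}\left(\lambda_{1}+\lambda_{i}\right)$ (the three $\lambda_{1}^{3}$ terms cancel exactly), and each summand is nonnegative because Perron--Frobenius gives $\lambda_{1}\geq\left\vert \lambda_{i}\right\vert$. Be aware, however, that the paper does not prove Lemma \ref{BoNiL} at all: it quotes it as the instance $r=2$ of Theorem 1 of \cite{BoNi07}, and the closest in-paper analogue, Theorem \ref{th1}, establishes the strictly stronger inequality $\rho^{3}-\rho m+c\rho t^{\prime\prime}\leq3t$ by an entirely different technique, namely counting $k$-walks and passing to the limit via Wei's theorem. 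So your route is genuinely different: it is shorter, completely self-contained, and needs no connectivity or nonbipartiteness hypotheses, which is exactly what is wanted for the bare lemma. What the walk-counting method buys in exchange is the extra structural term $t^{\prime\prime}$ and the possibility of generalizing to $K_{r+1}$-free graphs for $r\geq3$, neither of which is visible from the trace computation; indeed the concluding remarks of the paper suggest that purely spectral arguments of your kind are unlikely to extend to $r\geq3$. Your closing sanity check (that $t=0$ recovers Nosal's inequality $\rho^{2}\leq m$) is also accurate and is precisely the role the lemma plays in Section \ref{mtS}.
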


\section{\label{sups}Results supporting the proof of Theorem \ref{mt}}

Our proof of Theorem \ref{mt} is quite involved, so we first describe its key points.

We start by taking a nonnegative unit eigenvector $\left(  x_{1},\ldots
,x_{n}\right)  $ to $\rho\left(  G\right)  $. If there is an edge $\left\{
i,j\right\}  \in E\left(  G\right)  $ with
\[
x_{i}x_{j}<\frac{1}{8\sqrt{e\left(  G\right)  }},
\]
we remove it and show that the graph $G^{\prime}=G-\left\{  i,j\right\}  $
satisfies
\[
\rho\left(  G^{\prime}\right)  >\sqrt{e\left(  G^{\prime}\right)  }+\frac
{1}{4\sqrt{e\left(  G\right)  }}.
\]
If such edge removals are carried out sufficiently long, the difference
$\rho\left(  G^{\prime}\right)  -\sqrt{e\left(  G^{\prime}\right)  }$ becomes
large enough to entail that $bk\left(  G^{\prime}\right)  $ is as large as needed..

If on the contrary, at some stage it appears that
\[
x_{i}x_{j}\geq\frac{1}{8\sqrt{e\left(  G^{\prime}\right)  }}%
\]
for all edges $\left\{  i,j\right\}  \in E\left(  G^{\prime}\right)  ,$ it
turns out that $G^{\prime}$ may have only one nontrivial component and is
nonbipartite. Under these premises for $G^{\prime}$, Theorem \ref{supt}
implies that $bk\left(  G^{\prime}\right)  $ is as large as needed.

Theorem \ref{supt} itself is based on two rather technical results--Theorem
\ref{th1} and Lemma \ref{colem}.

\begin{theorem}
\label{th1}Let $G$ be a connected nonbipartite graph of order $n$ with
\[
e\left(  G\right)  =m,\text{ \ }\rho\left(  G\right)  =\rho,\text{
\ \ }t\left(  G\right)  =t,\text{ \ \ }t^{\prime\prime}\left(  G\right)
=t^{\prime\prime}.
\]
Suppose that $\left(  x_{1},\ldots,x_{n}\right)  $ is a positive unit
eigenvector to $\rho$ and let%
\[
c=\frac{1}{x_{1}+\cdots+x_{n}}\min\left\{  x_{1},\ldots,x_{n}\right\}  .
\]
Then
\[
\rho^{3}-\rho m+c\rho t^{\prime\prime}\leq3t.
\]

\end{theorem}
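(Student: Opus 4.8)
The plan is to derive the exact identity
\[
\rho^{3}=\rho m+\frac{\rho}{\sigma}\sum_{u\in V(G)}t(u)\,x_{u}-\frac{\rho}{\sigma}\sum_{u\in V(G)}t^{\prime\prime}(u)\,x_{u},
\]
where $\sigma:=x_{1}+\cdots+x_{n}$ (so that $c=\sigma^{-1}\min\{x_{1},\ldots,x_{n}\}$), and then to read off the asserted inequality by bounding the first sum above by $3t$ and the second below by $c\rho t^{\prime\prime}$. Throughout, $d(v)$ denotes the degree of $v$, so that $w_{3}(u)=\sum_{v\in N(u)}d(v)$ is the number of $3$-walks starting at $u$.

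The crucial step is the local counting identity
\[
w_{3}(u)=m+t(u)-t^{\prime\prime}(u)\qquad\text{for every }u\in V(G),
\]
which I would prove by sorting the edges of $G$ according to their incidence with the closed neighbourhood $N[u]:=N(u)\cup\{u\}$. Summing degrees over $N[u]$ gives $d(u)+w_{3}(u)$, and the edges lying inside $N[u]$ are exactly the $d(u)$ edges at $u$ together with the $t(u)$ edges inside $N(u)$; hence precisely $w_{3}(u)-t(u)$ edges of $G$ meet $N[u]$, so the remaining $m-w_{3}(u)+t(u)$ edges have both ends in $V(G)\setminus N[u]$, and these are exactly the edges counted by $t^{\prime\prime}(u)$.

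Next I would compute $\sum_{u}w_{3}(u)x_{u}$ in two ways. Interchanging the order of summation in $\sum_{u}x_{u}\sum_{v\in N(u)}d(v)$ gives $\sum_{v}d(v)\sum_{u\in N(v)}x_{u}=\rho\sum_{v}d(v)x_{v}$, and summing the eigenequation $\rho x_{v}=\sum_{u\in N(v)}x_{u}$ over all $v$ yields $\rho\sigma=\sum_{v}d(v)x_{v}$; hence $\sum_{u}w_{3}(u)x_{u}=\rho^{2}\sigma$. On the other hand, substituting the local identity gives $\sum_{u}w_{3}(u)x_{u}=m\sigma+\sum_{u}t(u)x_{u}-\sum_{u}t^{\prime\prime}(u)x_{u}$. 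Equating the two expressions and multiplying through by $\rho/\sigma$ produces the identity for $\rho^{3}$ above. For the estimates: since $(x_{1},\ldots,x_{n})$ is positive and $u\notin N(u)$, we have $\rho x_{u}=\sum_{v\in N(u)}x_{v}\le\sigma-x_{u}<\sigma$, so $\rho x_{u}/\sigma<1$ for every $u$; therefore $\frac{\rho}{\sigma}\sum_{u}t(u)x_{u}\le\sum_{u}t(u)=3t$, while $\frac{\rho}{\sigma}\sum_{u}t^{\prime\prime}(u)x_{u}\ge\frac{\rho}{\sigma}\big(\min_{v}x_{v}\big)\sum_{u}t^{\prime\prime}(u)=c\rho t^{\prime\prime}$. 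Plugging both bounds into the identity gives $\rho^{3}\le\rho m+3t-c\rho t^{\prime\prime}$, which is the claim.

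The only genuinely non-routine point is spotting the local identity $w_{3}(u)=m+t(u)-t^{\prime\prime}(u)$, that is, recognising that the ``walk deficit'' $m+t(u)-w_{3}(u)$ counts precisely the edges that avoid $N[u]$, and realising that $\sum_{u}w_{3}(u)x_{u}$ should be evaluated both spectrally and combinatorially; everything else is bookkeeping. (Connectedness and non-bipartiteness are used only to guarantee that the prescribed eigenvector is positive. Alternatively one can reach the $\rho^{3}$ identity through Wei's theorem, via $\rho^{3}=\lim_{k\to\infty}w_{k+3}(G)/w_{k}(G)=\sigma^{-1}\sum_{u}x_{u}w_{4}(u)$ together with $w_{4}(u)=\sum_{v\in N(u)}w_{3}(v)$, but the direct computation above avoids the limiting argument.)
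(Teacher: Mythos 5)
Your proof is correct, and it takes a genuinely different route from the paper's. The paper argues via walk counts: for each edge $\{u,v\}$ it splits $V(G)$ into $N(u)\cup N(v)$ and $\overline{N}(u)\cap\overline{N}(v)$, sums the resulting identity over all edges to obtain $w_{k}(G)\,m=w_{k+2}(G)-\sum_{u}t(u)w_{k}(u)+\sum_{u}t^{\prime\prime}(u)w_{k}(u)$, bounds $\sum_{u}t(u)w_{k}(u)\le 3t\,w_{k-1}(G)$, and then lets $k\to\infty$, invoking Cvetkovi\'c's formula for $w_{k}(G)/w_{k-1}(G)\to\rho$ and Wei's theorem for $w_{k}(u)/w_{k}(G)\to x_{u}/\sigma$ --- which is exactly where connectedness and nonbipartiteness are needed. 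You instead prove the per-vertex identity $w_{3}(u)=m+t(u)-t^{\prime\prime}(u)$ by classifying edges according to their intersection with $N(u)$ (edges inside $N(u)$ are the $t(u)$ triangle edges at $u$, edges avoiding $N[u]$ are the $t^{\prime\prime}(u)$ edges, the rest meet $N(u)$ once), weight it by $x_{u}$, and evaluate $\sum_{u}w_{3}(u)x_{u}=\mathbf{x}^{T}A^{2}\mathbf{1}=\rho^{2}\sigma$ directly; the estimates $\rho x_{u}\le\sigma-x_{u}<\sigma$ and $x_{u}\ge\min_{v}x_{v}$ then yield the two bounds. The aggregate identity is the same in both proofs (your eigenvector-weighted sum is precisely the $k\to\infty$ limit of the paper's walk identity), but your derivation is finite and elementary: it dispenses with the two external limit theorems and uses only the positivity of the given eigenvector, so the connectedness and nonbipartiteness hypotheses play no role beyond guaranteeing that such an eigenvector exists. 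All individual steps --- the edge classification, the double counting of $\sum_{u}w_{3}(u)x_{u}$ using $\sum_{v}d(v)x_{v}=\rho\sigma$, and the final estimates --- check out.
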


\begin{proof}
For every edge $\left\{  u,v\right\}  \in E\left(  G\right)  ,$ obviously%
\begin{align*}
w_{k}\left(  G\right)   &  =%
%TCIMACRO{\tsum \limits_{w\in N\left(  u\right)  \cup N\left(  v\right)  }}%
%BeginExpansion
{\textstyle\sum\limits_{w\in N\left(  u\right)  \cup N\left(  v\right)  }}
%EndExpansion
w_{k}\left(  w\right)  +%
%TCIMACRO{\tsum \limits_{w\in\overline{N}\left(  u\right)  \cap\overline
%{N}\left(  v\right)  }}%
%BeginExpansion
{\textstyle\sum\limits_{w\in\overline{N}\left(  u\right)  \cap\overline
{N}\left(  v\right)  }}
%EndExpansion
w_{k}\left(  w\right) \\
&  =%
%TCIMACRO{\tsum \limits_{w\in N\left(  u\right)  }}%
%BeginExpansion
{\textstyle\sum\limits_{w\in N\left(  u\right)  }}
%EndExpansion
w_{k}\left(  w\right)  +%
%TCIMACRO{\tsum \limits_{w\in N\left(  v\right)  }}%
%BeginExpansion
{\textstyle\sum\limits_{w\in N\left(  v\right)  }}
%EndExpansion
w_{k}\left(  w\right)  -%
%TCIMACRO{\tsum \limits_{w\in N\left(  u\right)  \cap N\left(  v\right)  }}%
%BeginExpansion
{\textstyle\sum\limits_{w\in N\left(  u\right)  \cap N\left(  v\right)  }}
%EndExpansion
w_{k}\left(  w\right)  +%
%TCIMACRO{\tsum \limits_{w\in\overline{N}\left(  u\right)  \cap\overline
%{N}\left(  v\right)  }}%
%BeginExpansion
{\textstyle\sum\limits_{w\in\overline{N}\left(  u\right)  \cap\overline
{N}\left(  v\right)  }}
%EndExpansion
w_{k}\left(  w\right) \\
&  =w_{k+1}\left(  u\right)  +w_{k+1}\left(  v\right)  -%
%TCIMACRO{\tsum \limits_{w\in N\left(  u\right)  \cap N\left(  v\right)  }}%
%BeginExpansion
{\textstyle\sum\limits_{w\in N\left(  u\right)  \cap N\left(  v\right)  }}
%EndExpansion
w_{k}\left(  w\right)  +%
%TCIMACRO{\tsum \limits_{w\in\overline{N}\left(  u\right)  \cap\overline
%{N}\left(  v\right)  }}%
%BeginExpansion
{\textstyle\sum\limits_{w\in\overline{N}\left(  u\right)  \cap\overline
{N}\left(  v\right)  }}
%EndExpansion
w_{k}\left(  w\right)  .
\end{align*}
Summing this identity over all edges $\left\{  u,v\right\}  \in E\left(
G\right)  ,$ we get%
\begin{align*}
w_{k}\left(  G\right)  m  &  =%
%TCIMACRO{\tsum \limits_{u\in V\left(  G\right)  }}%
%BeginExpansion
{\textstyle\sum\limits_{u\in V\left(  G\right)  }}
%EndExpansion
w_{k+1}\left(  u\right)  d\left(  u\right)  -%
%TCIMACRO{\tsum \limits_{u\in V\left(  G\right)  }}%
%BeginExpansion
{\textstyle\sum\limits_{u\in V\left(  G\right)  }}
%EndExpansion
t\left(  u\right)  w_{k}\left(  u\right)  +%
%TCIMACRO{\tsum \limits_{u\in V\left(  G\right)  }}%
%BeginExpansion
{\textstyle\sum\limits_{u\in V\left(  G\right)  }}
%EndExpansion
t^{\prime\prime}\left(  u\right)  w_{k}\left(  u\right) \\
&  =w_{k+2}\left(  G\right)  -%
%TCIMACRO{\tsum \limits_{u\in V\left(  G\right)  }}%
%BeginExpansion
{\textstyle\sum\limits_{u\in V\left(  G\right)  }}
%EndExpansion
t\left(  u\right)  w_{k}\left(  u\right)  +t^{\prime\prime}\left(  u\right)
w_{k}\left(  u\right)  .
\end{align*}
Since for any vertex $u$ we have $w_{k}\left(  u\right)  \leq w_{k-1}\left(
G\right)  ,$ it follows that
\[%
%TCIMACRO{\tsum \limits_{u\in V\left(  G\right)  }}%
%BeginExpansion
{\textstyle\sum\limits_{u\in V\left(  G\right)  }}
%EndExpansion
t\left(  u\right)  w_{k}\left(  u\right)  \leq%
%TCIMACRO{\tsum \limits_{u\in V\left(  G\right)  }}%
%BeginExpansion
{\textstyle\sum\limits_{u\in V\left(  G\right)  }}
%EndExpansion
t\left(  u\right)  w_{k-1}\left(  G\right)  =3tw_{k-1}\left(  G\right)  .
\]
Hence,%
\[
w_{k}\left(  G\right)  m\geq w_{k+2}\left(  G\right)  -3tw_{k-1}\left(
G\right)  +%
%TCIMACRO{\tsum \limits_{u\in V\left(  G\right)  }}%
%BeginExpansion
{\textstyle\sum\limits_{u\in V\left(  G\right)  }}
%EndExpansion
t^{\prime\prime}\left(  u\right)  w_{k}\left(  u\right)  .
\]
Dividing both sides by $w_{k-1}\left(  G\right)  ,$ we get
\[
\frac{w_{k}\left(  G\right)  }{w_{k-1}\left(  G\right)  }m\geq\frac
{w_{k+2}\left(  G\right)  }{w_{k-1}\left(  G\right)  }-3t+\frac{w_{k}\left(
G\right)  }{w_{k-1}\left(  G\right)  }%
%TCIMACRO{\tsum \limits_{u\in V\left(  G\right)  }}%
%BeginExpansion
{\textstyle\sum\limits_{u\in V\left(  G\right)  }}
%EndExpansion
t^{\prime\prime}\left(  u\right)  \frac{w_{k}\left(  u\right)  }{w_{k}\left(
G\right)  }%
\]
The formula for the number of $k$-walks by Cvekovi\'{c} \cite{Cve70} (see also
\cite{CRS10}, p. 15) implies that
\[
\lim_{k\rightarrow\infty}\frac{w_{k}\left(  G\right)  }{w_{k-1}\left(
G\right)  }=\rho\text{ \ and \ }\lim_{k\rightarrow\infty}\frac{w_{k+2}\left(
G\right)  }{w_{k-1}\left(  G\right)  }=\rho^{3}.
\]
Hence, in view of Theorem \ref{tWei}, we see that
\begin{align*}
m\rho &  \geq\rho^{3}-3t+\rho%
%TCIMACRO{\tsum \limits_{u\in V\left(  G\right)  }}%
%BeginExpansion
{\textstyle\sum\limits_{u\in V\left(  G\right)  }}
%EndExpansion
t^{\prime\prime}\left(  u\right)  \frac{x_{u}}{x_{1}+\cdots+x_{n}}\\
&  \geq\rho^{3}-3t+c\rho t^{\prime\prime}.
\end{align*}
Theorem \ref{th1} is proved.
\end{proof}

\begin{lemma}
\label{colem}If $G$ is a graph of order $n$ with $bk\left(  G\right)  =\beta$,
then
\[
\left(  n-3\beta\right)  t\left(  G\right)  \leq\beta t^{\prime\prime}\left(
G\right)  .
\]

\end{lemma}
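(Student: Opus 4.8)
The plan is to double count the pairs $\left(  T,u\right)  $ where $T$ is a triangle of $G$ and $u\in V\left(  G\right)  \setminus V\left(  T\right)  $, organized according to the number of vertices of $T$ to which $u$ is adjacent. For a triangle $T$ let $n_{i}\left(  T\right)  $ be the number of vertices outside $T$ adjacent to exactly $i$ vertices of $T$, so that $n_{0}\left(  T\right)  +n_{1}\left(  T\right)  +n_{2}\left(  T\right)  +n_{3}\left(  T\right)  =n-3$. Also, for an edge $e$ let $t\left(  e\right)  $ denote the number of triangles containing $e$, so $t\left(  e\right)  \leq bk\left(  G\right)  =\beta$ for every edge, and let $t^{\prime\prime}\left(  e\right)  $ be the number of vertices nonadjacent to both endpoints of $e$, so that $\sum_{e}t^{\prime\prime}\left(  e\right)  =t^{\prime\prime}\left(  G\right)  $.

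The first step handles the right-hand side. Fix a triangle $T=\left\{  a,b,c\right\}  $. Any vertex counted by $t^{\prime\prime}\left(  \left\{  a,b\right\}  \right)  $ lies outside $V\left(  T\right)  $ (it avoids $a,b$, hence also $c\in N\left(  a\right)  $), and it is adjacent to at most the third vertex $c$; conversely a vertex in $n_{0}\left(  T\right)  $ is counted by $t^{\prime\prime}\left(  e\right)  $ for all three edges $e$ of $T$, a vertex in $n_{1}\left(  T\right)  $ for exactly one, and a vertex in $n_{2}\left(  T\right)  \cup n_{3}\left(  T\right)  $ for none. Therefore $\sum_{e\in E\left(  T\right)  }t^{\prime\prime}\left(  e\right)  =3n_{0}\left(  T\right)  +n_{1}\left(  T\right)  $. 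Summing over all triangles and swapping the order of summation,
\[
\sum_{T}\left(  3n_{0}\left(  T\right)  +n_{1}\left(  T\right)  \right)
=\sum_{e\in E\left(  G\right)  }t^{\prime\prime}\left(  e\right)  t\left(
e\right)  \leq\beta\sum_{e\in E\left(  G\right)  }t^{\prime\prime}\left(
e\right)  =\beta t^{\prime\prime}\left(  G\right)  .
\]

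The second step handles the left-hand side. Again fix $T=\left\{  a,b,c\right\}  $. For the edge $\left\{  a,b\right\}  $, the common neighbours of $a$ and $b$ other than $c$ number $t\left(  \left\{  a,b\right\}  \right)  -1$; each such vertex lies in $n_{2}\left(  T\right)  $ if adjacent to exactly $a,b$ and in $n_{3}\left(  T\right)  $ if adjacent to all of $a,b,c$. Counting with multiplicity over the three edges of $T$ gives $n_{2}\left(  T\right)  +3n_{3}\left(  T\right)  =\sum_{e\in E\left(  T\right)  }\left(  t\left(  e\right)  -1\right)  \leq3\beta-3$, so in particular $n_{2}\left(  T\right)  +n_{3}\left(  T\right)  \leq3\beta-3$. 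Hence
\[
3n_{0}\left(  T\right)  +n_{1}\left(  T\right)  \geq n_{0}\left(  T\right)
+n_{1}\left(  T\right)  =\left(  n-3\right)  -n_{2}\left(  T\right)
-n_{3}\left(  T\right)  \geq n-3\beta,
\]
and summing over the $t\left(  G\right)  $ triangles of $G$ yields $\sum_{T}\left(  3n_{0}\left(  T\right)  +n_{1}\left(  T\right)  \right)  \geq\left(  n-3\beta\right)  t\left(  G\right)  $. Combining the two bounds gives $\left(  n-3\beta\right)  t\left(  G\right)  \leq\beta t^{\prime\prime}\left(  G\right)  $.

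The only delicate point is bookkeeping: one must keep the third vertex of each triangle correctly excluded in both counts, so that $t^{\prime\prime}\left(  e\right)  $ matches $3n_{0}+n_{1}$ and $t\left(  e\right)  -1$ matches the $n_{2}/n_{3}$ split, and one must assign the multiplicities $3$ and $1$ to the right classes of external vertices. There is no genuine obstacle beyond this careful counting; note also that when $n\leq3\beta$ the asserted inequality is trivial, so no case analysis on the sign of $n-3\beta$ is needed.
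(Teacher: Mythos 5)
Your proof is correct and is essentially the same double count as the paper's: your per-triangle quantities $n_{0}(T),\ldots,n_{3}(T)$ sum over all triangles to the paper's induced-subgraph counts $k_{4}^{(0)},k_{4}^{(1)},2k_{4}^{(2)},4k_{4}$, your identity $\sum_{e\in E(T)}t''(e)=3n_{0}(T)+n_{1}(T)$ is the paper's relation (\ref{in3}), and your bound $\sum_{e\in E(T)}(t(e)-1)=n_{2}(T)+3n_{3}(T)\leq 3\beta-3$ is (\ref{in2}). The only difference is presentational: you keep the bookkeeping local to each triangle rather than combining three global relations, which slightly streamlines the final step.
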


\begin{proof}
For $0\leq j\leq2,$ write $k_{4}^{(j)}$ for the number of induced subgraphs of
$G$ that are isomorphic to a triangle together with an additional vertex
joined to precisely $j$ vertices of the triangle. E.g., $k_{4}^{\left(
0\right)  }\left(  G\right)  $ is the number of induced subgraphs of $G$ that
are isomorphic to a triangle with an isolated vertex. In addition, we write
$k_{4}$ for the number of $4$-cliques of $G.$

It is not hard to check the following three relations:%
\begin{equation}
\left(  n-3\right)  t=k_{4}^{\left(  0\right)  }+k_{4}^{\left(  1\right)
}+2k_{4}^{\left(  2\right)  }+4k_{4}. \label{in1}%
\end{equation}%
\begin{equation}
3\left(  \beta-1\right)  t\geq%
%TCIMACRO{\tsum \limits_{\left\{  u,v\right\}  \in E\left(  G\right)  }}%
%BeginExpansion
{\textstyle\sum\limits_{\left\{  u,v\right\}  \in E\left(  G\right)  }}
%EndExpansion
\left\vert N\left(  u\right)  \cap N\left(  v\right)  \right\vert \left(
\left\vert N\left(  u\right)  \cap N\left(  v\right)  \right\vert -1\right)
=2k_{4}^{\left(  2\right)  }+12k_{4}. \label{in2}%
\end{equation}%
\begin{equation}
\beta t^{\prime\prime}\geq%
%TCIMACRO{\tsum \limits_{\left\{  u,v\right\}  \in E\left(  G\right)  }}%
%BeginExpansion
{\textstyle\sum\limits_{\left\{  u,v\right\}  \in E\left(  G\right)  }}
%EndExpansion
\left\vert N\left(  u\right)  \cap N\left(  v\right)  \right\vert
|\overline{N}\left(  u\right)  \cap\overline{N}\left(  v\right)  |\text{
}=k_{4}^{\left(  1\right)  }+3k_{4}^{\left(  0\right)  }. \label{in3}%
\end{equation}

Now, subtracting (\ref{in2}) from (\ref{in1}), in view of (\ref{in3}), we find
that
\begin{align*}
nt-3\beta t  &  \leq k_{4}^{\left(  0\right)  }+k_{4}^{\left(  1\right)
}+2k_{4}^{\left(  2\right)  }+4k_{4}-2k_{4}^{\left(  2\right)  }-12k_{4}\\
&  =k_{4}^{\left(  0\right)  }+k_{4}^{\left(  1\right)  }-8k_{4}\\
&  \leq\beta t^{\prime\prime}.
\end{align*}

\end{proof}

Having Theorem \ref{th1} and Lemma \ref{colem} in hand, we are ready to prove
a statement similar to Theorem \ref{mt} under some extra assumptions.

\begin{theorem}
\label{supt}Let $G$ be a connected graph with $m$ edges such that $\rho\left(
G\right)  >\sqrt{m}.$ Suppose that $\left(  x_{1},\ldots,x_{n}\right)  $ is a
positive unit eigenvector to $\rho\left(  G\right)  .$ If%
\[
x_{i}x_{j}\geq\frac{1}{8\sqrt{m}}%
\]
for every edge $\left\{  i,j\right\}  \in E\left(  G\right)  $, then
\[
bk\left(  G\right)  >\frac{1}{12}\sqrt[4]{2m}.
\]

\end{theorem}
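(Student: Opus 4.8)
The plan is to combine Theorem \ref{th1} and Lemma \ref{colem} with a lower bound on the constant $c$ of Theorem \ref{th1} extracted from the hypothesis on the edge-products. Write $\beta:=bk\left(G\right)$, $t:=t\left(G\right)$, and $t'':=t''\left(G\right)$. First I would record a few preliminary facts. Since $\left(x_{1},\ldots,x_{n}\right)$ is positive, $G$ has no isolated vertices, so $n\leq 2m$ and $2m\leq n\left(n-1\right)$. Since $\rho\left(G\right)>\sqrt{m}$ while every bipartite graph has spectral radius at most $\sqrt{m}$, the graph $G$ is nonbipartite. Finally, by Lemma \ref{BoNiL}, $3t\geq\rho^{3}-\rho m=\rho\left(\rho^{2}-m\right)>0$, so $t>0$ and therefore $\beta\geq 1$.

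Next I would dispose of the case $n\leq 3\beta$: here $\beta\geq n/3$, and from $2m\leq n\left(n-1\right)<n^{2}$ we get $\beta>\sqrt{2m}/3$, which already exceeds $\tfrac{1}{12}\sqrt[4]{2m}$ for every $m\geq 1$. So from now on I may assume $n>3\beta$.

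With $n>3\beta$ and $t>0$, Lemma \ref{colem} gives $t''\geq\left(n-3\beta\right)t/\beta$, while Theorem \ref{th1} (valid because $G$ is connected and nonbipartite) gives $c\rho t''\leq 3t-\rho\left(\rho^{2}-m\right)<3t$. Combining and cancelling $t$ yields $c\rho\left(n-3\beta\right)<3\beta$. The heart of the argument is a lower bound on $c\rho$. Let $u$ be a vertex with $x_{u}=\min_{i}x_{i}$; as $G$ has no isolated vertices, $u$ has a neighbour $v$, so the hypothesis gives $x_{v}\geq 1/\left(8\sqrt{m}\,x_{u}\right)$, and from $\rho x_{u}\geq x_{v}$ we get $\rho x_{u}^{2}\geq 1/\left(8\sqrt{m}\right)$. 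Since $\sum_{i}x_{i}\leq\sqrt{n}$ by Cauchy--Schwarz and $\rho>\sqrt{m}$,
\[
c\rho=\frac{\rho x_{u}}{\sum_{i}x_{i}}\geq\frac{\rho x_{u}}{\sqrt{n}}=\frac{\sqrt{\rho\cdot\rho x_{u}^{2}}}{\sqrt{n}}\geq\frac{1}{\sqrt{n}}\sqrt{\frac{\rho}{8\sqrt{m}}}>\frac{1}{\sqrt{8n}}.
\]
Plugging this into $c\rho\left(n-3\beta\right)<3\beta$ gives $n-3\beta<6\sqrt{2}\,\beta\sqrt{n}$, and since $\sqrt{n}\geq 1$ the term $3\beta$ is absorbed to give $\sqrt{n}<\left(3+6\sqrt{2}\right)\beta$. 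Hence $2m\leq n\left(n-1\right)<n^{2}<\left(3+6\sqrt{2}\right)^{4}\beta^{4}$, and since $\left(3+6\sqrt{2}\right)^{2}=81+36\sqrt{2}<144$ this gives $\sqrt[4]{2m}<12\beta$, i.e. $bk\left(G\right)>\tfrac{1}{12}\sqrt[4]{2m}$.

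I expect the delicate point to be exactly the lower bound on $c\rho$: passing from the uniform lower bound on the products $x_{i}x_{j}$ over edges to a lower bound on the smallest coordinate $\min_{i}x_{i}$, and then verifying that the constant $3+6\sqrt{2}$ produced at the end is genuinely below $12$ — which holds with a little room to spare, so the crude bounds $n\leq 2m$, $\sum_{i}x_{i}\leq\sqrt{n}$, and the absorption of $3\beta$ are all comfortably good enough.
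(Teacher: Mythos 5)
Your proposal is correct and follows essentially the same route as the paper: the same lower bound $c\rho>1/\sqrt{8n}$ extracted from the edge-product hypothesis (via $\rho x_u^2\geq x_ux_v$ and Cauchy--Schwarz), combined with Theorem \ref{th1} and Lemma \ref{colem} to force $n-3\beta<3\beta\sqrt{8n}$ and hence $\beta>\sqrt{n}/12\geq\sqrt[4]{2m}/12$. Your explicit verification that $t>0$ (via Lemma \ref{BoNiL}) before cancelling it, and the separate treatment of the case $n\leq3\beta$, are minor tidy-ups of steps the paper leaves implicit.
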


\begin{proof}
Let $\rho=\rho\left(  G\right)  ,$ let $i$ be a vertex, and suppose that
$\left\{  i,j\right\}  \in E\left(  G\right)  $. We have
\[
\rho x_{i}^{2}\geq x_{j}x_{j}>\frac{1}{8\sqrt{m}}>\frac{1}{8\rho}.
\]
Now, letting
\[
c=\frac{1}{x_{1}+\cdots+x_{n}}\min\left\{  x_{1},\ldots,x_{n}\right\}  ,
\]
we see that
\[
c\rho>\frac{1}{\sqrt{8}}\cdot\frac{1}{x_{1}+\cdots+x_{n}}\geq\frac{1}%
{\sqrt{8n}}.
\]
Note that $G$ is nonbipartite as $\rho>\sqrt{m}$. Hence, Theorem \ref{th1}
implies that
\[
t^{\prime\prime}<\sqrt{8n}\left(  3t+\rho m-\rho^{3}\right)  .
\]
Combining this inequality with Lemma \ref{colem} and letting $\beta=bk\left(
G\right)  $, we find that%
\begin{align*}
\left(  n-3\beta\right)  t  &  \leq\beta t^{\prime\prime}<\beta\sqrt
{8n}\left(  3t+\rho m-\rho^{3}\right) \\
\left(  n-3\beta-3\beta\sqrt{8n}\right)  t  &  <\beta\sqrt{8n}\left(
m-\rho^{2}\right)  \rho\leq0.
\end{align*}
Therefore, $n-3\beta-3\beta\sqrt{8n}<0,$ and we see that
\[
\beta>\frac{n}{3\left(  1+\sqrt{8n}\right)  }>\frac{n}{3\left(  4\sqrt
{n}\right)  }=\frac{\sqrt{n}}{12}\geq\frac{1}{12}\sqrt[4]{2m}.
\]
The proof of Theorem \ref{supt} is completed.
\end{proof}

\section{\label{mtS}Proof of Theorem \ref{mt}}

We may suppose that $G$ has no isolated vertices.

We first describe a simple procedure that constructs a sequence of graphs
\[
G_{0}\supset G_{1}\supset\cdots\supset G_{l}%
\]
such that $V\left(  G_{i}\right)  =V\left(  G\right)  $ and $e\left(
G_{i}\right)  =m-i$ for every $i=1,\ldots,l.\medskip$

\textbf{Step 1} Set\textbf{ }$l:=0$ and $G_{0}:=G$.$\medskip$

\textbf{Step 2 }If $l=\left\lceil m/2\right\rceil $, stop.\medskip

\textbf{Step 3 }Let $\mathbf{x}_{l}=\left(  x_{1},\ldots,x_{n}\right)  $ be a
nonnegative unit eigenvector to $\rho\left(  G_{l}\right)  $.\medskip

\textbf{Step 4 }If there is an edge $\left\{  i,j\right\}  \in E\left(
G_{l}\right)  $ with%
\[
x_{i}x_{j}<\frac{1}{8\sqrt{m-l}},
\]
set
\begin{align*}
G_{l+1}  &  :=G_{l}-\left\{  i,j\right\} \\
l  &  :=l+1
\end{align*}
and iterate the procedure from step 2.\medskip

\textbf{Step 5 }If there is no such edge, stop.\medskip

Let $k$ be the number of the last graph constructed by the procedure. Note
that for every $s=1,\ldots,k$%
\begin{equation}
\rho\left(  G_{s}\right)  >\rho\left(  G_{s-1}\right)  -\frac{1}{4\sqrt
{m-s+1}}. \label{rin}%
\end{equation}
Indeed, let $\mathbf{x}_{s-1}=\left(  x_{1},\ldots,x_{n}\right)  $ be the unit
eigenvector to $\rho_{s-1}$ and let $\left\{  i,j\right\}  \in E\left(
G_{s-1}\right)  $ be the edge such that
\[
G_{s}=G_{s-1}-\left\{  i,j\right\}  ,
\]
which entails%
\[
x_{i}x_{j}<\frac{1}{8\sqrt{m-s+1}}.
\]
The Rayleigh principle implies that
\begin{align*}
\rho\left(  G_{s}\right)   &  \geq2%
%TCIMACRO{\tsum \limits_{\left\{  u,v\right\}  \in E\left(  G_{s}\right)  }}%
%BeginExpansion
{\textstyle\sum\limits_{\left\{  u,v\right\}  \in E\left(  G_{s}\right)  }}
%EndExpansion
x_{u}x_{v}=-2x_{i}x_{j}+2%
%TCIMACRO{\tsum \limits_{\left\{  u,v\right\}  \in E\left(  G_{s}\right)  }}%
%BeginExpansion
{\textstyle\sum\limits_{\left\{  u,v\right\}  \in E\left(  G_{s}\right)  }}
%EndExpansion
x_{u}x_{v}\\
&  =\rho\left(  G_{s-1}\right)  -2x_{i}x_{j}>\rho_{s-1}-\frac{1}{4\sqrt
{m-s+1}}.
\end{align*}
Now, adding inequalities (\ref{rin}) for $s=1,\ldots,k$, we get
\begin{align*}
\rho\left(  G_{k}\right)   &  \geq\rho\left(  G_{0}\right)  -\frac{1}%
{4\sqrt{m}}-\cdots-\frac{1}{4\sqrt{m-k+1}}\\
&  \geq\sqrt{m-k}+\sqrt{m}-\sqrt{m-k}-\frac{k}{4\sqrt{m-k+1}}\\
&  >\sqrt{m-k}+\frac{k}{2\sqrt{m}}-\frac{k}{4\sqrt{m/2}}\\
&  =\sqrt{m-k}+\frac{k}{4\sqrt{m}}\left(  2-\sqrt{2}\right)  .
\end{align*}

Suppose that the procedure stops because $k=\left\lceil m/2\right\rceil .$
Then we have%
\[
\rho\left(  G_{k}\right)  >\sqrt{m-k}+\frac{1}{4}\left(  \sqrt{2}-1\right)
\sqrt{m-k},
\]
and in view of
\[
bk\left(  G_{k}\right)  \left(  m-k\right)  \geq3t\left(  G_{k}\right)  ,
\]
Lemma \ref{BoNiL} implies that%
\begin{align*}
bk\left(  G_{k}\right)  \left(  m-k\right)   &  \geq\rho\left(  G_{k}\right)
\left(  \rho^{2}\left(  G_{k}\right)  -\left(  m-k\right)  \right) \\
&  \geq\left(  m-k\right)  \sqrt{m-k}\left(  \left(  1+\frac{1}{4}\left(
\sqrt{2}-1\right)  \right)  ^{2}-1\right) \\
&  >\frac{1}{5}\left(  m-k\right)  \sqrt{m-k}.
\end{align*}
Now, we see that
\[
bk\left(  G\right)  \geq bk\left(  G_{k}\right)  >\frac{1}{5}\sqrt
{\left\lfloor m/2\right\rfloor }>\frac{1}{12}\sqrt[4]{m},
\]
completing the proof of Theorem \ref{mt} in the case $k=\left\lceil
m/2\right\rceil $.

Next, suppose that the procedure stops because
\begin{equation}
x_{i}x_{j}\geq\frac{1}{8\sqrt{m-k}} \label{elb}%
\end{equation}
for every edge $\left\{  i,j\right\}  \in E\left(  G_{k}\right)  $.

Let us drop all isolated vertices that $G_{k}$ may have and write
$G_{k}^{\prime}$ for the resulting graph. Let $\left(  x_{1},\ldots
,x_{p}\right)  $ be the restriction of $\mathbf{x}_{k}=\left(  x_{1}%
,\ldots,x_{n}\right)  $ to the vertices of $G_{k}^{\prime}.$

Inequality (\ref{elb}) implies that $\left(  x_{1},\ldots,x_{p}\right)  $ is
positive. We shall show that $G_{k}^{\prime}$ is connected. Indeed, since
$\left(  x_{1},\ldots,x_{p}\right)  $ is positive, the spectral radius of each
component of $G_{k}^{\prime}$ is equal to $\rho\left(  G_{k}^{\prime}\right)
.$ If $G$ has more than one component, let $C$ be a component of
$G_{k}^{\prime}$ with smallest number of edges. We see that
\[
2e\left(  C\right)  \leq e\left(  G_{k}^{\prime}\right)  \leq\rho\left(
G_{k}^{\prime}\right)  =\rho\left(  C\right)  ,
\]
which is a contradiction. Hence, $G_{k}^{\prime}$ is connected$.$

If $k\geq1$, we have
\[
\rho\left(  G_{k}^{\prime}\right)  >\sqrt{m-k},
\]
so $G_{k}^{\prime}$ is nonbipartite. Now, Theorem \ref{th2} implies that%
\[
bk\left(  G\right)  \geq bk\left(  G_{k}^{\prime}\right)  >\frac{1}%
{12}\sqrt[4]{2\left(  m-k\right)  }.
\]
In view of $k\leq\left\lfloor m/2\right\rfloor ,$ we get
\[
bk\left(  G\right)  \geq\frac{1}{12}\sqrt[4]{m}.
\]

It remains the case $k=0$, that is, $G_{k}=G$. We assumed that $G$ has no
isolated vertices, and we showed above that $\rho\left(  G\right)  \geq m$
implies that $G$ is connected. Hence, if $G$ is nonbipartite, then Theorem
\ref{supt} implies that
\[
bk\left(  G\right)  >\frac{1}{12}\sqrt[4]{2m}>\frac{1}{12}\sqrt[4]{m}.
\]

Finally, if $G$ is bipartite, then $\rho\left(  G\right)  =\sqrt{m}$ implies
that $G$ is complete bipartite.

Theorem \ref{mt} is proved.

\section{\label{BNS}Proof of Theorem \ref{th2}}

In this section we prove Theorem \ref{th2}. Our proof is based on a simple
analytic result:

\begin{lemma}
\label{vecl}Let $k\geq3,$ and $a,b,x_{1},\ldots,x_{k}$ be nonnegative numbers
such that
\[
b\leq a,\text{ }x_{1}\leq a,\text{ }\ldots\text{ },\text{ }x_{k}\leq a.
\]
If%
\[
x_{1}^{2}+\text{ }\cdots\text{ }+x_{k}^{2}\leq a^{2}+b^{2},
\]
then for every real $p>2,$%
\[
x_{1}^{p}+\text{ }\cdots\text{ }+x_{k}^{p}<a^{p}+b^{p},
\]
unless%
\[
x_{1}=a,\text{ }x_{2}=b,\text{ }x_{3}=\text{ }\cdots\text{ }=x_{k}=0.
\]

\end{lemma}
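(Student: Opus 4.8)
The plan is to fix $p > 2$ and treat the left-hand side as a function to be maximized over the closed, bounded region
\[
R = \Bigl\{ (x_1,\ldots,x_k) : 0 \le x_i \le a \text{ for all } i,\ \ x_1^2 + \cdots + x_k^2 \le a^2 + b^2 \Bigr\}.
\]
Since $f(x_1,\ldots,x_k) = x_1^p + \cdots + x_k^p$ is continuous and $R$ is compact, $f$ attains a maximum on $R$; I claim the maximum is attained only at a point with two of the coordinates equal to $a$ (or one equal to $a$ and one equal to $b$) and the rest zero, and then I compare the candidate values. The key structural observation is a convexity/exchange argument: for $p > 2$ the function $t \mapsto t^{p/2}$ is strictly convex on $[0,\infty)$, so given the constraint on the sum of squares, pushing mass to the extremes increases $f$. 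Concretely, if at a maximizer two coordinates $x_i, x_j$ both lie strictly in the open interval $(0,a)$, one can perturb them keeping $x_i^2 + x_j^2$ fixed while moving one of them toward $a$ or toward $0$ until it hits an endpoint; strict convexity of $s \mapsto s^{p/2}$ applied to $s = x_i^2$ shows this strictly increases $x_i^p + x_j^p$ unless the perturbation is trivial. Hence at the maximum, at most one coordinate is strictly between $0$ and $a$.

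So a maximizer has the form: some number $r$ of coordinates equal to $a$, one coordinate equal to some $t \in [0,a]$, and the rest zero; the constraint reads $r a^2 + t^2 \le a^2 + b^2$. If $r \ge 2$, then $t = 0$ is forced by $r a^2 \le a^2 + b^2 \le 2a^2$ (using $b \le a$), so in fact $r = 2$ and $t = 0$, contributing $f = 2a^p$. If $r = 1$, then $t^2 \le b^2$, i.e. $t \le b$, and since $t \mapsto t^p$ is increasing the best choice is $t = b$, giving $f = a^p + b^p$. If $r = 0$, a single coordinate $t$ with $t^2 \le a^2 + b^2$ but also $t \le a$ gives at most $a^p$. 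Comparing the three candidate values: $a^p + b^p$ versus $2a^p$ versus $a^p$. Since $b \le a$ we have $a^p + b^p \le 2a^p$; and since $p > 2$ one checks $2a^p \le a^p + b^p$ fails in general — wait, this needs care, so the real comparison to pin down is between $2a^p$ and $a^p + b^p$. Here $2a^p \ge a^p + b^p$ with equality iff $a = b$. So if $a > b$ the genuine maximum is $2a^p$...

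Let me reconsider: the inequality to be proved is $x_1^p + \cdots + x_k^p < a^p + b^p$ with the stated unique exception, which suggests $a^p + b^p$ should be the maximum, not $2a^p$. The resolution is that the configuration with two coordinates equal to $a$ is \emph{not} feasible when $a > b$: it needs $2a^2 \le a^2 + b^2$, i.e. $a \le b$, forcing $a = b$ — in which case "$x_1 = a, x_2 = b$" already covers it. So when $a > b$ the only feasible extremal configurations are $r \le 1$, and the maximum is exactly $a^p + b^p$, attained only at $x_1 = a, x_2 = b, x_3 = \cdots = x_k = 0$ (up to relabeling, but the statement fixes the ordering). When $a = b$ the configuration $(a,a,0,\ldots,0)$ coincides with $(a,b,0,\ldots,0)$, so the exceptional case still reads as stated. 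Strictness comes from strict convexity: any feasible point not of the exceptional form gives strictly smaller $f$.

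The main obstacle I anticipate is making the exchange/perturbation argument fully rigorous at the boundary — in particular, handling the interaction between the box constraints $x_i \le a$ and the sphere constraint $\sum x_i^2 \le a^2 + b^2$ simultaneously, and verifying that the "slack" in the sphere constraint is consumed optimally. A clean way to organize this is: first argue the maximum lies on the sphere $\sum x_i^2 = a^2 + b^2$ if any coordinate is below $a$ with room to grow (otherwise trivial), then use the convexity exchange on pairs of interior coordinates to reduce to at most one interior coordinate, then finitely enumerate. I would also double-check the edge case $k \ge 3$ is genuinely used: it guarantees there are enough coordinates that the "rest are zero" slots exist, and it is exactly where the lemma's hypothesis $k \ge 3$ enters.
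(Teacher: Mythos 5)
Your proposal is correct and follows essentially the same route as the paper: both maximize $x_1^p+\cdots+x_k^p$ over the same compact region and exploit the strict convexity of $z\mapsto z^{p/2}$ via a two-coordinate perturbation that preserves the sum of squares. The only difference is the endgame --- the paper pins the largest coordinate to $a$ and then bounds the remaining coordinates directly by $\sum_{i<k}x_i^p\leq b^{p-2}\sum_{i<k}x_i^2\leq b^p$, whereas you iterate the exchange to a full classification of maximizers and enumerate the resulting configurations --- but this is a cosmetic variation, not a different method.
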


\begin{proof}
We may suppose that $a>0,$ as otherwise the assertion is trivially
true.\textbf{ }

Fix $a>0$ and $b\geq0$, and write $X_{a,b}$ for the compact set of all vectors
$\left(  y_{1},\ldots,y_{k}\right)  $ satisfying%
\begin{align*}
0  &  \leq y_{i}\leq a,\text{ \ \ \ }\left(  1\leq i\leq k\right) \\
y_{1}^{2}+\cdots+y_{k}^{2}  &  \leq a^{2}+b^{2}.
\end{align*}

Let the continuous function $y_{1}^{p}+\cdots+y_{k}^{p}$ attains maximum over
$X_{a,b}$ at $\left(  x_{1},\ldots,x_{k}\right)  $, and suppose by symmetry
that
\[
x_{1}\leq x_{2}\leq\cdots\leq x_{k}.
\]
We shall show that $x_{k}=a$. Assume for contradiction that $x_{k}<a.$ Then we
have%
\[
x_{1}^{2}+\cdots+x_{k}^{2}=a^{2}+b^{2},
\]
as otherwise we can increase $x_{k}$ by a tiny bit, keeping the resulting
vector in $X$ and increasing $x_{1}^{p}+$ $\cdots$ $+x_{k}^{p},$ which
contradicts the choice of $\left(  x_{1},\ldots,x_{k}\right)  $.

Since $x_{k}<a,$ we find that $x_{k-1}>0,$ for otherwise
\[
x_{k-1}=\cdots=x_{1}=0,
\]
and so
\[
x_{1}^{2}+\cdots+x_{k}^{2}<a^{2}+b^{2}.
\]

Since the function $x^{2}$ is a homeomorphism for $x>0$, we can find
$\varepsilon>0$ and $\delta>0$ such that
\[
x_{k-1}-\varepsilon>0,\text{ \ \ }x_{k}+\delta<a,
\]
and
\[
\left(  x_{k-1}-\varepsilon\right)  ^{2}+\left(  x_{k}+\delta\right)
^{2}=x_{k-1}^{2}+x_{k}^{2}.
\]
Therefore, the $k$-vector
\[
(x_{1},\text{ }\ldots\text{ },\text{ }x_{k-2,}\text{ }x_{k-1}-\varepsilon
,\text{ }x_{k}+\delta)
\]
belongs to $X_{a,b}.$

Note that for $z>0$ the function $z^{p/2}$ is strictly convex, as its second
derivative
\[
\frac{p\left(  p-2\right)  }{4}z^{p/2-2}%
\]
is positive. Hence, if $0<\alpha<z_{1}\leq z_{2},$ we have%
\[
\left(  z_{1}-\alpha\right)  ^{p/2}+\left(  z_{2}+\alpha\right)  ^{p/2}%
>z_{1}^{p/2}+z_{2}{}^{p/2}.
\]

Now setting
\begin{align*}
z_{1} &  =x_{k-1}^{2},\\
z_{2} &  =x_{k}^{2},\\
\alpha &  =x_{k-1}^{2}-\left(  x_{k-1}-\varepsilon\right)  ^{2}=\left(
x_{k}+\delta\right)  ^{2}-x_{k}^{2},
\end{align*}
we see that%
\begin{align*}
\left(  x_{k-1}-\varepsilon\right)  ^{p}+\left(  x_{k}+\delta\right)  ^{p} &
=\left(  z_{1}-\alpha\right)  ^{p/2}+\left(  z_{2}+\alpha\right)  ^{p/2}\\
&  >z_{1}^{p/2}+z_{2}{}^{p/2}\\
&  =x_{k-1}^{p}+x_{k}^{p}.
\end{align*}
This inequality contradicts the assumption that $y_{1}^{p}+\cdots+y_{k}^{p}$
attains maximum over $X_{a,b}$ at $\left(  x_{1},\ldots,x_{k}\right)  ,$ and
therefore $x_{k}=a$.

Further, we see that
\[
x_{1}^{2}+\cdots+x_{k-1}^{2}\leq b^{2},
\]
which yields%
\[
x_{1}\leq b,\text{ }\ldots\text{ },\text{ }x_{k-1}\leq b.
\]
Therefore,%
\begin{align*}
b^{p} &  \geq x_{1}^{2}b^{p-2}+\cdots+x_{k-1}^{2}b^{p-2}\\
&  \geq x_{1}^{p}+\cdots+x_{k-1}^{p}.
\end{align*}
Equality may hold only if
\[
x_{k-1}=b\text{ \ \ and \ }x_{k-2}=\cdots=x_{1}=0.
\]
Hence,%
\begin{align*}
a^{p}+b^{p} &  \geq x_{1}^{p}+\cdots+x_{k-1}^{p}+a^{p}\\
&  =x_{1}^{p}+\cdots+x_{k}^{p},
\end{align*}
completing the proof of the lemma.
\end{proof}

\begin{proof}
[\textbf{Proof of Theorem \ref{th2}}]Let $\lambda_{1}\geq\cdots\geq\lambda
_{n}$ be the adjacency eigenvalues of $G.$

We first prove the contrapositive of the statement of the theorem: if
\[
\lambda_{1}^{2}+\lambda_{2}^{2}>m,
\]
then $G$ has a triangle.

Clearly, we may assume that $G$ is a noncomplete graph and therefore
$\lambda_{2}\geq0.$ Since
\[
\lambda_{1}^{2}+\cdots+\lambda_{n}^{2}=2m,
\]
we see that%
\begin{equation}
\lambda_{1}^{2}+\lambda_{2}^{2}>m>\lambda_{3}^{2}+\cdots+\lambda_{n}%
^{2}.\label{inl}%
\end{equation}

Let%
\begin{align*}
k  &  =n-2,\\
a  &  =\lambda_{1},\\
b  &  =\lambda_{2},\\
x_{i}  &  =\left\vert \lambda_{i+2}\right\vert ,\text{ \ \ }i=1,\ldots,k,\\
p  &  =3.
\end{align*}

Since $a\geq b$ and $a\geq x_{i}$ for $i=1,\ldots,k,$ Lemma \ref{vecl} implies
that
\[
\lambda_{1}^{3}+\lambda_{2}^{3}\geq\left\vert \lambda_{3}\right\vert
^{3}+\cdots+\left\vert \lambda_{n}\right\vert ^{3}.
\]

Note that equality cannot hold above, for otherwise Lemma \ref{vecl} implies
that
\[
\lambda_{1}^{2}+\lambda_{2}^{2}=\lambda_{3}^{2}+\cdots+\lambda_{n}^{2},
\]
contradicting (\ref{in2}).

Hence,%
\[
6t\left(  G\right)  =\lambda_{1}^{3}+\lambda_{2}^{3}+\lambda_{3}^{3}%
+\cdots+\lambda_{n}^{3}\geq\lambda_{1}^{3}+\lambda_{2}^{3}-\left\vert
\lambda_{3}\right\vert ^{3}-\cdots-\left\vert \lambda_{n}\right\vert ^{3}>0.
\]
Therefore, $G$ contains a triangle, proving the inequality of Theorem
\ref{th2}.

If $G$ is triangle-free and
\[
\lambda_{1}^{2}+\lambda_{2}^{2}=m,
\]
setting $k,a,b,x_{i},p$ as above, Lemma \ref{vecl} implies that
\[
\lambda_{1}^{3}+\lambda_{2}^{3}=\left\vert \lambda_{3}\right\vert ^{3}%
+\cdots+\left\vert \lambda_{n}\right\vert ^{3},
\]
and therefore, the condition for equality in Lemma \ref{vecl} implies that
\begin{align*}
\lambda_{1}^{2}  &  =\lambda_{n}^{2},\\
\lambda_{2}^{2}  &  =\lambda_{n-1}^{2},\\
\lambda_{3}  &  =\cdots=\lambda_{n-2}=0.
\end{align*}
Now the condition for equality in Theorem \ref{th2} follows from a result of
Oboudi \cite{Obo16}, exactly as in \cite{LNW21}.
\end{proof}

\section{Concluding remarks}

The bound on $bk\left(  G\right)  $ given by Theorem \ref{mt} seems far from
optimal, as the multiplicative constant and perhaps the exponent $1/4$ can be improved.

It seems unlikely that Lemma \ref{vecl} can be extended to support the proof
of Conjecture \ref{con2} for $r\geq3.$

\end{document}